\documentclass{amsart}

\usepackage{graphicx} 
\usepackage{amsmath,amssymb,amsfonts}
\usepackage{url} 
\usepackage[title]{appendix}
\usepackage{algorithm}
\usepackage{algorithmic}
\usepackage{enumerate}
\usepackage{pgfplots}
\pgfplotsset{compat=1.8}

\theoremstyle{plain}
\newtheorem{theorem}{Theorem}

\newtheorem{lemma}[theorem]{Lemma}

\theoremstyle{definition}

\newtheorem{example}[theorem]{Example}

\theoremstyle{remark}
\newtheorem{remark}[theorem]{Remark}

\newcommand{\F}{\mathcal{F}}
\newcommand{\X}{\mathcal{X}}
\newcommand{\Y}{\mathcal{Y}}
\newcommand{\N}{\mathcal{N}}
\newcommand{\Lc}{\mathcal{L}}
\newcommand{\R}{\mathbb{R}}
\newcommand{\Z}{\mathbb{Z}}

\begin{document}

\title[Separation, CQ and Cycling in OA]{Separation, Constraint Qualifications, and Cycling in Outer Approximation}

\author[E. Tamm]{Erik Tamm}
\address{Department of Mathematics, KTH Royal Institute of Technology, Lindstedtsv\"agen~25, 10044 Stockholm, Sweden}
\email{etamm@kth.se}
\urladdr{https://orcid.org/0009-0008-2878-1458}

\author[J. Kronqvist]{Jan Kronqvist}
\address{Department of Mathematics, KTH Royal Institute of Technology, Lindstedtsv\"agen~25, 10044 Stockholm, Sweden}
\email{jankr@kth.se}
\urladdr{https://orcid.org/0000-0003-0299-5745}

\date{June 22, 2026}

\thanks{This work was funded by the Swedish Research Council (grant number 2022-03502).}

\keywords{Mixed-Integer Nonlinear Programming, Outer Approximation, Extended Cutting Plane, Constraint Qualifications}

\begin{abstract}
The outer approximation algorithm is a widely used method for solving convex mixed-integer nonlinear programs. While the algorithm is well established in theory and practice, certain assumptions underlying its convergence are rarely discussed in the literature. In this paper, we examine two such assumptions: that a constraint qualification holds at the optimal solution of each nonlinear programming subproblem, and that these subproblems can be solved exactly. We argue that both assumptions are connected to the issue of cycling, by which we mean that the same integer assignment reappears in successive iterations of the algorithm. When a constraint qualification fails, separation of the current iterate from the feasible set is not guaranteed, which can cause the algorithm to stall. When the nonlinear programming subproblem is solved only approximately, separation may likewise fail, and we show that high precision can be required in certain cases. To formalize the connection between these issues, we prove that when Slater's condition is nearly violated, a point close to the exact solution can be found at which separation fails. Furthermore, within the outer approximation algorithm, we propose to use extended cutting planes as a fallback strategy when cycling is detected. We prove that this approach yields a finitely convergent algorithm under relaxed assumptions regarding constraint qualifications, thereby generalizing the convergence theory of the outer approximation algorithm.
\end{abstract}

\maketitle

\section{Introduction}\label{sec:introduction}
The field of mixed-integer nonlinear programming (MINLP) is broad, covering a wide range of problems with numerous applications across science and engineering thanks to the generality of the modelling framework. For an overview of the field, see \cite{belotti2013mixed, kronqvist2025}. An important subclass is so-called convex MINLPs. A problem is commonly considered convex if both the objective function and the linear relaxation of the feasible set are convex. This structural assumption opens the possibility of developing efficient algorithms. One such successful case is the outer approximation (OA) algorithm, first presented in \cite{duran_1986} and later revised in \cite{fletcher_1994}. The underlying idea is that the original problem can be solved by iteratively refining a linear relaxation, that is, an \emph{outer approximation}, of the feasible set through the addition of so-called gradient cuts.

Two closely related algorithms are the extended cutting plane (ECP) algorithm \cite{westerlund_1995} and the extended supporting hyperplane (ESH) algorithm \cite{kronqvist2016extended}. These rely on a similar linear outer approximation idea, but they construct the gradient cuts in a different way compared to the OA algorithm. There are also extensions of the OA algorithm, such as the branch-and-cut method LP/NLP-BB \cite{quesada1992lp}, level-based OA (L-OA) and quadratic OA (Q-OA) \cite{kronqvist2020using}. Furthermore, generalizations have been presented for, e.g., non-smooth problems \cite{eronen_2014, delfino_2018} and some non-convex problems \cite{dambrosio_2009}.

The OA algorithm has been successful in numerical implementations, with many different versions available both as open-source and as commercial software \cite{kronqvist2019review}. These include, for example, DICOPT \cite{dicopt}, BONMIN \cite{bonami2008algorithmic}, MINOTAUR \cite{mahajan2021minotaur}, AIMMS OA \cite{aimms_aoa} and KNITRO \cite{knitro}. The SHOT solver also integrates key components from the OA algorithm \cite{shot}.

In this paper, we discuss and investigate the convergence of the OA algorithm in relation to the assumption of constraint qualifications and to the non-exact solutions of the nonlinear programming (NLP) subproblems. To determine the point at which a gradient cut should be added, the OA algorithm solves an NLP subproblem. For the algorithm to converge, a constraint qualification must hold for this problem. Furthermore, an implicit assumption is that the subproblem can be solved exactly. We investigate the link between these issues, propose a way of handling them, and show that the proposed approach is theoretically sound. This, therefore, generalizes the convergence properties of the OA algorithm under relaxed assumptions regarding constraint qualifications for the NLP subproblems.

The assumption of a constraint qualification in the NLP subproblem is critical to ensure convergence both in theory and in practice. If the solution reported by the NLP subproblem does not satisfy a constraint qualification, the gradient cut that is added is not guaranteed to improve the linear relaxation in a way that prevents the algorithm from stalling. In practice, this means that the same integer assignment may be revisited without making any progress, which we refer to as cycling. However, in practice it is very difficult to verify that a particular constraint qualification holds for all integer assignments, hence some care is needed for handling such situations.

The assumption that the NLP subproblems are solved exactly is implicit, but important to consider in practical implementations. The point obtained from the NLP subproblem is only guaranteed to improve the linear relaxation of the feasible set if the exact solution is found. This is clearly unreasonable in numerical implementations. We will show that arbitrarily high precision of the solution may be required in some circumstances, demonstrating that numerical tolerances should not be ignored.

The issues of constraint qualifications and non-exact solutions are not commonly discussed in the existing literature, but they can lead to convergence failure. The assumption of a constraint qualification is usually included without much discussion, and the exactness of the solution is rarely mentioned. In \cite{kronqvist2020using}, the authors note that some problems in MINLPLib \cite{minlplib} fail to converge with the OA algorithm. Failure to converge can also be observed in \cite[Fig.~3]{kronqvist2019review}, where most solvers fail to converge on certain problems. Note that these failures are not due to time or iteration limits. Furthermore, the reason for such errors is not always known, and for some solvers the issues described here could be reported instead as time-outs.

Some suggestions have been made for handling the issue of cycling. The most naive approach is to use no-good cuts, which remove precisely the integer assignment in question. This is mentioned, for example, in the documentation of DICOPT, where such cuts are referred to as integer cuts \cite{dicopt}. This may not be a computationally efficient way of handling cycling. For general integer variables, no-good cuts require the introduction of binary variables, which increases the size of the problem. They also do not improve the linear relaxation, and they can complicate the computation of lower bounds for the optimal objective value. Another idea, mentioned as an ad-hoc method in \cite{kronqvist2020using}, is to add an ECP cut instead of an OA cut as a fallback strategy. In this paper, we formalize this technique and prove that it is theoretically sound.

The paper is organized as follows. In Section \ref{sec:background}, the convex MINLP problem is formally introduced together with the running example that will be used for illustration purposes. In Section \ref{sec:cycling}, the issue of cycling is introduced and discussed. In Section \ref{sec:CQ}, the importance of constraint qualification assumptions for the OA algorithm is discussed and connected to cycling. In Section \ref{sec:non-exact solutions}, the precision required of the solutions of the NLP subproblems in the OA algorithm is discussed and connected to the two former issues. Two theoretical results are then developed in Section \ref{sec:theory}. First, we establish formally the issue of nearly violating Slater's condition. Second, we show that finite convergence is guaranteed for relaxed assumptions of the OA algorithm with ECP cuts used as a fallback. Specifically, no constraint qualification is assumed for NLP subproblems corresponding to suboptimal integer assignments. Finally, in Section \ref{sec:conclusion}, we summarize our findings.

\section{Background}\label{sec:background}
We consider MINLP problems of the form
\begin{equation}\label{eq:General MINLP}\tag{$P$}
    \begin{array}{rl}
        \min & f(x, y) = c^\top x + d^\top y \\
        \text{s.t.} & g(x, y) \leq 0, \\
                    & x \in \X, y \in \Y.
    \end{array}
\end{equation}
The set $\X \subset \R^n$ is assumed to be a nonempty compact convex set defined by linear inequalities, and the set $\Y \subset \Z^m$ is assumed to be finite. The function $g \colon \R^{n + m} \rightarrow \R^p$ is assumed to be convex and once continuously differentiable. We use the notation $\N = \{(x, y) \in \R^n \times \Z^m \colon g(x, y) \leq 0\}$, $\Lc = \X \times \Y$ and $\F = \N \cap \Lc$, and we denote the optimal objective value by $f^*$. If the integer variables are fixed, we obtain the NLP subproblem
\begin{equation}\tag{NLP($\hat{y}$)}\label{eq:NLP subproblem}
    \begin{array}{rl}
        \min & c^\top x + d^\top \hat{y} \\
        \text{s.t.} & g(x, \hat{y}) \leq 0, \\
                    & x \in \X.
    \end{array}
\end{equation}

The OA and ECP algorithms rely on a common idea. We begin with some initial linear relaxation of the feasible set, giving the so-called mixed-integer linear programming (MILP) subproblem. Solving this subproblem yields a point $(\hat{x}, \hat{y})$. If this point is not feasible for \eqref{eq:General MINLP}, it should be separated from $\F$. To achieve this, a gradient cut is used, which for a constraint $g_i$ and a point $\hat{z} = (\hat{x}, \hat{y})$ is defined as $$g_i(\hat{z}) + \nabla g_i(\hat{z})^\top (z - \hat{z}) \leq 0.$$ We use the terms OA cut and ECP cut to denote the cuts derived for the two algorithms. Both are gradient cuts, but they are derived from different points. The ECP cut is simply a gradient cut at $(\hat{x}, \hat{y})$. To derive the OA cut, we solve \eqref{eq:NLP subproblem} to obtain an optimal solution $\bar{x}$. The OA cut is then simply a gradient cut at $(\bar{x}, \hat{y})$. If \eqref{eq:NLP subproblem} is infeasible, it is handled by means of the so-called NLP feasibility problem in order to find a point for the gradient cut. Details of the OA algorithm can be found in \cite{fletcher_1994}, and of the ECP algorithm in \cite{westerlund_1995}. A pseudocode summary of the algorithms has also been included in Appendices \ref{app:OA} and \ref{app:ECP} for reference.

\subsection{Running example}\label{subsec:running example}
We will use a simple MINLP as a running example for illustration purposes. Consider, for $r \in [1, \sqrt{2})$, the problem
\begin{equation}\label{eq:running example}
    \begin{array}{rl}
        \min & -2x - y \\
        \text{s.t.} & x^2 + y^2 \leq r^2, \\
                    & x \in [-1, 1], \\
                    & y \in \{0, 1\}. \\
    \end{array}
\end{equation}
We use $g(x, y) = x^2 + y^2 - r^2$ to denote the nonlinear constraint, so that the constraint reads $g(x, y) \leq 0$. Let $\F$ denote the feasible set.

If the nonlinear constraint is ignored, the optimal solution is $(1, 1)$. We will therefore consider the case in which $(1, 1)$ is to be separated from $\F$. In the OA algorithm, \eqref{eq:running example} is solved with $y$ fixed to $1$ in order to find a corresponding $x$, and a gradient cut is added at the point found. Fixing $y$ to $1$ yields
\begin{equation*}
    \begin{array}{rl}
        \min & - 2x - 1 \\
        \text{s.t.} & x^2 \leq r^2 - 1, \\
                    & x \in [-1, 1].
    \end{array}
\end{equation*}
The solution is then clearly $x = \sqrt{r^2 - 1}$. Hence $g$ is linearized at $(\sqrt{r^2 - 1}, 1)$ to obtain the gradient cut. In the ECP algorithm, a gradient cut is derived at the point $(1, 1)$, so the cut becomes $x + y \leq 1 + r^2 / 2$. We will return to this example throughout Sections \ref{sec:cycling}--\ref{sec:non-exact solutions}.

\section{Cycling}\label{sec:cycling}
Cycling, or stalling, is an issue for many optimization algorithms, including the OA algorithm. This section discusses a few reasons why it can occur for the OA algorithm.

Cycling, in the sense of this paper, occurs when the same solution is found in successive iterations. We wish to distinguish between two different situations. First, separation of the solution of the MILP problem can fail. When the OA algorithm functions normally, the point found by the MILP subproblem is separated from the feasible set by the added cut. If separation fails, the same point as before is again optimal for the MILP problem, so the algorithm stalls and no progress is made. Second, separation of the specific MILP solution may be achieved, but the same integer assignment can still remain feasible. In this case, although the previous MILP solution is no longer feasible, the situation might not cause any problem for the algorithm. If the distance from the cut to the MILP solution is small, it can nonetheless lead to stalling or convergence failure, since the improvement of the linear approximation of the feasible set is not sufficient. In theory, this should not happen to the OA algorithm, but for reasons discussed later it can still occur in numerical implementations.

Since we wish to consider both of these situations, we use the term cycling rather than failure of separation. While both situations are linked to the question of separation, the second case is not necessarily a failure of separation, but rather a matter of varying degrees of separation strength. We illustrate the issue of cycling with Example \ref{ex:cycling}.

As noted in the introduction, no-good cuts are one alternative for handling cycling, but they enlarge the problem and do not improve the linear relaxation. In Subsection~\ref{sec:finite convergence}, we argue that adding ECP cuts to the OA algorithm is a better way of handling the issue.

\begin{example}\label{ex:cycling}
Consider the running example \eqref{eq:running example} with $r = 1$, illustrated in Figure \ref{fig:cycling}. Recall that we are using an OA or ECP cut to separate $(1, 1)$ from $\F$. The OA cut is added at $(0, 1)$, which yields the cut $y \leq 1$. We see that $(1, 1)$ is not separated from $\F$, so the OA algorithm would cycle, since $(1, 1)$ remains the optimal solution of the MILP subproblem. For an ECP cut, we obtain the inequality $x + y \leq 1.5$, which does separate $(1, 1)$ from $\F$. However, $(0.5, 1)$ is an optimal solution of the MILP subproblem once the cut is added, so the integer assignment $\hat{y} = 1$ is not completely separated from $\F$, even though the approximation of $\F$ has improved.

\begin{figure}[ht]
    \centering
    \begin{tikzpicture}
        \begin{axis}[width = 0.8\textwidth,
                    xmin = -1.5,
                    xmax = 1.5,
                    ymin = 0,
                    ymax = 1.5,
                    axis equal image,
                    ymajorgrids=true,
                    grid style=dashed,
                    xlabel = $x$,
                    ylabel = $y$,
                    ytick = {0, 1}]
            \draw[black] (axis cs:0,0) circle[radius = 1];
            \addplot[mark = *] coordinates{(1, 1)} node[above] {(1,1)};
            \addplot[mark = *] coordinates{(0, 1)} node[below] {(0,1)};
            \addplot[domain = -1.5:1.5, samples = 10]{1};
            \addplot[domain = -1.5:1.5, samples = 10]{1.5-x};
        \end{axis}
    \end{tikzpicture}
    \caption{Illustration of Example \ref{ex:cycling}. The OA cut is $y \leq 1$ and the ECP cut is $x + y \leq 1.5$.}
    \label{fig:cycling}
\end{figure}
\end{example}

\section{Constraint qualification}\label{sec:CQ}
Constraint qualifications are important in optimization since they exclude degenerate and other edge cases. A classic overview of constraint qualifications by Mangasarian can be found in \cite{mangasarian_1969}. This section describes common constraint qualification assumptions for the OA algorithm, explains why they are critical for convergence, and discusses their connection to separation.

A constraint qualification ensures that the Karush--Kuhn--Tucker (KKT) conditions are necessary for locally optimal solutions, but we believe that a more geometric viewpoint can be helpful. The most general constraint qualification for convex optimization is Abadie's constraint qualification \cite{peterson_1973}. The condition states that the tangent cone to the feasible set at a feasible point is equal to the set of first-order feasible directions at that point. This means that linearizing the constraints at a given point approximates the feasible set well enough locally. Since the idea of algorithms such as OA is to linearize the constraints in order to approximate the feasible set, this assumption is important for ensuring separation. If $(\hat{x}, y^k)$ is obtained by solving the MILP subproblem and $\bar{x}$ is an optimal solution of (NLP($y^k$)), then the direction $\hat{x} - \bar{x}$ is clearly a descent direction for (NLP($y^k$)), since otherwise the MILP solution would not be a minimizer. The linearization of the constraints must therefore be able to restrict that direction. This is exactly what is ensured when the set of first-order feasible directions equals the tangent cone. Constraint qualifications are thus inherently linked to the question of separation.

This argument is used in the proof of the finite convergence of the OA algorithm in \cite[Thm.~2]{fletcher_1994}. The assumption of a constraint qualification implies that no feasible descent direction exists at an optimal solution of the NLP subproblem. The reason follows from the discussion above: the linearized constraints approximate the feasible set well enough.

Let us again consider the running example with $r = 1$ to illustrate a case in which separation fails because a constraint qualification is not satisfied. When $y$ is fixed to $1$, Abadie's condition does not hold at $(0, 1)$, which is the only feasible point for the NLP. Note that this implies that neither Slater's condition nor the LICQ holds. Separation is then not achieved, as was seen in Example \ref{ex:cycling}, and the OA algorithm does not converge. In this case, cycling and the failure to satisfy a sufficient constraint qualification are linked.

In the foundational papers on the OA algorithm, different constraint qualifications are assumed. In the paper by Duran \& Grossmann \cite{duran_1986}, generally considered the first to describe the OA algorithm, Slater's constraint qualification is assumed for the NLP subproblems. In Fletcher \& Leyffer \cite{fletcher_1994}, on the other hand, the assumption is that some constraint qualification holds, with Abadie's constraint qualification given as an example. The authors also note that the LICQ implies Abadie's condition. Slater's condition likewise implies Abadie's condition \cite{peterson_1973}. For the ECP algorithm, no constraint qualification is needed, since no NLP subproblems are solved \cite{westerlund_1995}. In practice, Slater's condition is, arguably, the most practical assumption since it is easier to verify. For more general constraint qualifications, like Abadie's condition, checking whether they are satisfied is often computationally intractable in practical applications.

It is important to note that, since both Slater's condition and the LICQ imply Abadie's condition, they are sufficient but not necessary. Consider the simple case of the two constraints $x^2 + x \leq 0$ and $x^2 - x \leq 0$. Then $x = 0$ is the only feasible point, and at this point neither Slater's condition nor the LICQ holds. Linearizing at $x = 0$ gives the cuts $x \leq 0$ and $-x \leq 0$, so any infeasible point is separated and separation never fails. Note that for both the original constraints and the linearized constraints, $x = 0$ is the only feasible point. Therefore, both the tangent cone and the set of first-order feasible directions contain only $0$, so Abadie's condition holds.

The importance of constraint qualifications is also apparent when one compares the OA algorithm with its generalizations to non-differentiable functions. In \cite{eronen_2014}, the authors observe that subgradients must satisfy the KKT conditions for the NLP subproblem if separation is to be achieved. This again illustrates the central role of constraint qualifications, since they are what ensure that optimal solutions satisfy the KKT conditions.

\section{Non-exact solutions}\label{sec:non-exact solutions}
An important, but often implicit, assumption in optimization algorithms is that the subproblems can be solved exactly. When properties of the OA algorithm are analysed or proved, we implicitly assume that the NLP subproblems can be solved exactly. This is, of course, not true in practice when numerical optimization methods are used. In this section, we describe how non-exact solutions of the NLP subproblem can cause the OA algorithm to fail.

In most cases, small errors in the solution do not give rise to any critical issues for the algorithm. There are, however, situations in which the algorithm fails if a solution is not exact enough. Provided that all assumptions are fulfilled, the gradient cut added in the OA algorithm separates the last solution found, as long as the constraints are linearized at the exact NLP solution. There is, however, no guarantee that separation will succeed for an arbitrary approximate solution. To analyse which points can be used to derive separating gradient cuts, we may for the moment ignore the integrality constraints.

Consider the constraint $x_1^2 + \dots + x_n^2 \leq r^2$, that is, a sphere of radius $r$ in $\R^n$. Suppose that we wish to separate $\bar{x} \in \R^n$ from this set. We can construct a gradient cut at $a \in \R^n$ as
\begin{equation*}
    \sum_{i = 1}^n a_i^2 - r^2 + \sum_{i = 1}^n 2a_i(x_i - a_i) \leq 0.
\end{equation*}
Rearranging the inequality gives
\begin{equation*}
    \sum_{i=1}^n 2a_i x_i - a_i^2 \leq r^2,
\end{equation*}
which can be written as
\begin{equation*}
    \sum_{i = 1}^n x_i^2 - (x_i - a_i)^2 \leq r^2.
\end{equation*}
This cut fails to separate $\bar{x}$ if $\bar{x}$ satisfies this inequality, that is, if
\begin{equation}\label{eq:Unit sphere separation}
    R^2 := \sum_{i = 1}^n \bar{x}_i^2 - r^2 \leq \sum_{i=1}^n (\bar{x}_i - a_i)^2.
\end{equation}
Hence any point $a$ outside the sphere of radius $R$ centred at $\bar{x}$ fails to achieve separation.

We use this result in the running example to illustrate the situation.
\begin{example}\label{ex:non-exact solution}
    We can use the expression \eqref{eq:Unit sphere separation} to investigate at which points the nonlinear constraint can be linearized so as to separate $(1, 1)$ from $\F$. Here $\bar{x} = (1, 1)$, so any point outside the sphere of radius $\sqrt{2 - r^2}$ centred at $(1, 1)$ fails to achieve separation. In the figures, $a$ denotes the point at which the gradient cut is derived, and $x^k$ is the exact solution of the NLP subproblem with $y = 1$.

    We first consider the case $r = 1$, shown in Figure \ref{fig:non-exact_r_1}. The circle within which any linearization gives a separating hyperplane is the unit circle centred at $(1,1)$. This circle passes through $(0, 1)$, which means that any point $(-\varepsilon, 1)$ with $\varepsilon > 0$ fails to produce a separating hyperplane, whereas any point $(\varepsilon, 1)$ with $\varepsilon > 0$ separates $(1, 1)$ from $\F$. In this case the solution need not be exact, but the error must be in the ``correct'' direction in order to achieve separation. Linearizing at the point $a$ gives the plotted gradient cut, which does not separate $(1, 1)$.

    We now consider the case $r = 1.1$. Here the circle centred at $(1, 1)$ does not intersect the exact solution; instead, there is a neighbourhood of the exact solution within which separation is achieved. The point $a$ is the same distance from the exact solution as in the case $r = 1$. Since $a$ now lies inside the circle, the gradient cut does separate $(1, 1)$.

    \begin{figure}[ht]
        \centering
        \begin{tikzpicture}
            \begin{axis}[width = 0.8\textwidth,
                        xmin = -1.5,
                        xmax = 1.5,
                        ymin = 0,
                        ymax = 1.5,
                        axis equal image,
                        ymajorgrids=true,
                        grid style=dashed,
                        xlabel = $x$,
                        ylabel = $y$,
                        ytick = {0, 1}]
                \draw[black] (axis cs:0,0) circle[radius = 1];
                \addplot[mark = *] coordinates{(1, 1)} node[below] {(1,1)};
                \addplot[mark = *] coordinates{(0, 1)} node[below left] {$x^k$};
                \addplot[domain = -1.5:1.5, samples = 10]{1};
                \draw[black] (axis cs:1,1) circle[radius = 1];
                \addplot[mark = *] coordinates{(-0.2, 1)} node[above left] {$a$};
                \addplot[domain = -1.5:1.5, samples = 50]{0.2*x+1.02}; 
            \end{axis}
        \end{tikzpicture}
        \caption{Running example with $r = 1$. The gradient cut at the point $a$ does not separate $(1,1)$ from $\F$.}
        \label{fig:non-exact_r_1}
    \end{figure}

    \begin{figure}[ht]
        \centering
        \begin{tikzpicture}
            \begin{axis}[width = 0.8\textwidth,
                        xmin = -1.5,
                        xmax = 1.5,
                        ymin = 0,
                        ymax = 1.5,
                        axis equal image,
                        ymajorgrids=true,
                        grid style=dashed,
                        xlabel = $x$,
                        ylabel = $y$,
                        ytick = {0, 1}]
                \draw[black] (axis cs:0,0) circle[radius = 1.1];
                \addplot[mark = *] coordinates{(1, 1)} node[above] {(1,1)};
                \addplot[mark = *] coordinates{(sqrt(1.1^2-1), 1)} node[above right] {$x^k$};
                \addplot[domain = -1.5:1.5, samples = 10]{1};
                \draw[black] (axis cs:1,1) circle[radius = 0.79];
                \addplot[mark = *] coordinates{(sqrt(1.1^2-1)-0.2, 1)} node[below right] {$a$};
                \addplot[domain = -1.5:1.5, samples = 50]{1.14-0.26*x};
            \end{axis}
        \end{tikzpicture}
        \caption{Running example with $r = 1.1$. The gradient cut at the point $a$ does separate $(1,1)$ from $\F$.}
        \label{fig:non-exact_r_1.1}
    \end{figure}
\end{example}

We conclude from Example \ref{ex:non-exact solution} that the accuracy required for separation is not constant. In the first case, a non-exact solution was acceptable in some directions but not in others, whereas in the second case any solution up to some precision was acceptable. Note that neither Slater's condition nor the LICQ holds when $r = 1$, while, for instance, Slater's condition holds when $r = 1.1$. This suggests a connection between constraint qualifications and the accuracy required for separation.

\section{Two theoretical results}\label{sec:theory}
This section develops two results that together address the issues raised above: the first formalizes the connection between constraint qualifications and the precision required of the NLP solution, and the second shows that ECP cuts, added in conjunction with OA cuts, repair the convergence of the OA algorithm under relaxed assumptions.

\subsection{Separation and connection to Slater's condition}\label{subsec:slater}
To formalize the connection suggested by Example \ref{ex:non-exact solution}, our goal is to show that, when Slater's condition is nearly violated, one can construct a point close to the point used for the OA cut at which separation fails. As a consequence, a high precision is needed for the NLP subproblem when Slater's condition is nearly violated. In practical OA algorithms, only active constraints are used to derive gradient cuts. This motivates our result, since we show that we can find a point at which separation fails when only active constraints are used to derive the gradient cuts.

\begin{theorem}\label{thm:Slater}
Consider \eqref{eq:General MINLP}. Let $(\hat{x}, y^k)$ be a solution of a MILP subproblem that is not feasible for \eqref{eq:General MINLP}. Assume that (NLP($y^k$)) is feasible and satisfies Slater's condition, and assume further that $\N$ is a compact set. Then there is a point $(x, y^k) \in \N$ such that adding gradient cuts at this point for the active constraints does not separate $(\hat{x}, y^k)$ from $\F$.
\end{theorem}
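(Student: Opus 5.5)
The idea is to use the strictly feasible point supplied by Slater's condition as a pivot. Walking from the infeasible MILP point $(\hat{x}, y^k)$ through the Slater point and continuing until we leave the feasible fiber produces a boundary point of $\N$. Every active linearization at that point then ``faces away'' from $(\hat{x}, y^k)$, so none of these cuts can cut it off.

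First, by Slater's condition for (NLP($y^k$)) there is $x_s \in \X$ with $g(x_s, y^k) < 0$ componentwise. Since $(\hat{x}, y^k)$ is a MILP solution with $\hat{y}=y^k$ that is infeasible for \eqref{eq:General MINLP}, and $\hat x\in\X$, we get $g(\hat{x}, y^k) \not\leq 0$. In particular $\hat{x} \neq x_s$. Set $d = x_s - \hat{x} \neq 0$ and consider the ray $x(t) = x_s + t d$ for $t \geq 0$. Let $T = \sup\{t \geq 0 : g(x(t), y^k) \leq 0\}$. The set $\{x : (x, y^k) \in \N\}$ is a slice of the compact set $\N$ and hence bounded, so $T < \infty$. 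By continuity of $g$ the supremum is attained, and $T > 0$ because $g(x_s,y^k)<0$. Put $x^* = x(T)$, so $(x^*, y^k) \in \N$. Again by continuity, at least one constraint is active at $(x^*, y^k)$: otherwise all $g_i(x^*,y^k)<0$ and we could move further along the ray. Let $I \neq \emptyset$ be the active index set. Note that the statement only asks for a point of $\N$, so we do not need $x^* \in \X$.

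Next we check non-separation. For $i \in I$, convexity of $g_i$ gives
\[
0 > g_i(x_s, y^k) \geq g_i(x^*, y^k) + \nabla g_i(x^*, y^k)^\top \bigl((x_s, y^k) - (x^*, y^k)\bigr) = \nabla g_i(x^*, y^k)^\top (x_s - x^*, 0).
\]
Now $x_s - x^* = -T d$ and $\hat{x} - x^* = -(1+T) d = \tfrac{1+T}{T}(x_s - x^*)$. Hence
\[
g_i(x^*, y^k) + \nabla g_i(x^*, y^k)^\top (\hat{x} - x^*, 0) = \tfrac{1+T}{T}\, \nabla g_i(x^*, y^k)^\top (x_s - x^*, 0) < 0 .
\]
So $(\hat{x}, y^k)$ strictly satisfies every active gradient cut at $(x^*, y^k)$. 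It also satisfies all previously added cuts, since it was feasible for the MILP. Therefore it is not separated from $\F$.

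The main obstacle is the choice of direction. The obvious candidate, the boundary point between $x_s$ and $\hat{x}$, gives cuts that \emph{do} separate, by the same convexity inequality with the sign reversed. One has to go to the opposite side of the Slater point, and then justify that this ray actually leaves the feasible fiber at a point with a nonempty active set. Compactness of $\N$ is used precisely for this, and continuity of $g$ supplies both attainment of the supremum and activity of some constraint. The remaining care is bookkeeping: gradients are taken in the full variable $z=(x,y)$, but the $y$-component of every difference vanishes.
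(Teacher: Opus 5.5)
Your proposal is correct and follows essentially the same route as the paper. The paper also reflects through the Slater point along $\bar{x} + t(\bar{x} - \hat{x})$, uses compactness of $\N$ to reach a boundary point with a nonempty active set, and combines convexity with the positive scaling $\hat{x} - \tilde{x}(\tilde{t}) = \lambda(\bar{x} - \tilde{x}(\tilde{t}))$, $\lambda = 1 + 1/\tilde{t}$, which is your factor $\frac{1+T}{T}$. You are somewhat more careful than the paper in defining $T$ as an attained supremum and in justifying that some constraint is active there.
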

\begin{proof}
Since Slater's condition is fulfilled, there exists $\bar{x} \in \R^n$ and $\bar{\alpha} \in \R$ such that $$g_i(\bar{x}, y^k) \leq \bar{\alpha} < 0$$ for each $i$. Consider the points of the form $\tilde{x}(t) = \bar{x} + t(\bar{x} - \hat{x})$. Note that $\bar{x} \neq \hat{x}$. Let $$I(t) := \{i \in \{1, \ldots, p\} \colon g_i(\tilde{x}(t), y^k) = 0\}.$$ Since $\N$ is compact and each $g_i$ is continuous, there exists $\tilde{t} > 0$ such that $$g_i(\tilde{x}(\tilde{t}), y^k) \leq 0$$ for every $i$ and $I(\tilde{t}) \neq \varnothing$, that is, $(\tilde{x}(\tilde{t}), y^k)$ lies on the boundary of $\N$. Adding cuts at $\tilde{x}(\tilde{t})$ means deriving gradient cuts from $g_i$ for $i \in I(\tilde{t})$. By the convexity of $g_i$, we have
\begin{equation*}
    g_i(\tilde{x}(\tilde{t}), y^k) + \nabla g_i(\tilde{x}(\tilde{t}), y^k)^\top \begin{pmatrix}
        \bar{x} - \tilde{x}(\tilde{t}) \\ y^k - y^k
    \end{pmatrix} \leq g_i(\bar{x}, y^k).
\end{equation*}
Since $g_i(\tilde{x}(\tilde{t}), y^k) = 0 > g_i(\bar{x}, y^k)$ for $i \in I(\tilde{t})$, we obtain
\begin{equation*}
    \nabla g_i(\tilde{x}(\tilde{t}), y^k)^\top \begin{pmatrix}
        \bar{x} - \tilde{x}(\tilde{t}) \\ 0
    \end{pmatrix} < 0.
\end{equation*}

Note that $\hat{x} - \tilde{x}(\tilde{t}) = \lambda(\bar{x} - \tilde{x}(\tilde{t}))$ with $\lambda = 1 + 1/\tilde{t} > 0$. Therefore,
\begin{equation*}
    \nabla g_i(\tilde{x}(\tilde{t}), y^k)^\top \begin{pmatrix}
        \hat{x} - \tilde{x}(\tilde{t}) \\ 0
    \end{pmatrix}
    = \lambda \, \nabla g_i(\tilde{x}(\tilde{t}), y^k)^\top \begin{pmatrix}
        \bar{x} - \tilde{x}(\tilde{t}) \\ 0
    \end{pmatrix}
    \leq 0.
\end{equation*}

We conclude that
\begin{equation*}
    g_i(\tilde{x}(\tilde{t}), y^k) +
    \nabla g_i(\tilde{x}(\tilde{t}), y^k)^\top \begin{pmatrix}
        \hat{x} - \tilde{x}(\tilde{t}) \\ 0
    \end{pmatrix}
    \leq 0
\end{equation*}
for all $i \in I(\tilde{t})$. Hence the gradient cut does not separate $(\hat{x}, y^k)$ from $\F$, assuming that only active constraints are linearized.
\end{proof}

\begin{remark}
    The definition of $I(t)$ can be extended to include all indices $i$ for which $g_i(x, y) \geq \bar{\alpha}$ without changing the proof. Consequently, the use of tolerances to determine which constraints are active in a numerical setting does not pose a problem for the theoretical construction of a point where separation fails.
\end{remark}

Our interpretation of Theorem \ref{thm:Slater} is as follows. Consider the situation in which $\N$ is contained in a small open ball. This means that the feasible set of \eqref{eq:NLP subproblem} is contained in some neighbourhood $B_\varepsilon(x^k)$. If $\varepsilon$ is small, then Slater's condition is nearly violated, since the interior of the feasible set is also small. This in turn implies that $\|x^k - \tilde{x}\| < \varepsilon$. Hence the precision of the solution of \eqref{eq:NLP subproblem} must be high, as there is otherwise a risk of failing to separate $(\hat{x}, y^k)$. This idea connects the three concepts of cycling, constraint qualification and non-exact solution of the NLP subproblem.

\subsection{Finite convergence of OA with relaxed assumptions}\label{sec:finite convergence}
Assume that we are solving \eqref{eq:General MINLP} using the OA algorithm from \cite{fletcher_1994}. The subproblems \eqref{eq:NLP subproblem} are now those obtained by fixing $y = \hat{y} \in \Y$ in \eqref{eq:General MINLP}. In \cite{fletcher_1994}, it is assumed that a constraint qualification holds at all optimal solutions of \eqref{eq:NLP subproblem} whenever the problem is feasible. This assumption will be relaxed in this section. We instead assume that a constraint qualification holds at all optimal solutions of \eqref{eq:NLP subproblem} where $\hat{y}$ corresponds to an optimal solution of \eqref{eq:General MINLP}. We will prove that for this relaxed assumption, finite convergence is achieved if the OA algorithm is combined with ECP cuts.

We now adapt the OA algorithm by changing how cuts are added. Let $$\Y^* = \{y \in \Z^m \colon \exists x \in\R^n  \text{ such that } (x, y) \text{ is optimal for \eqref{eq:General MINLP}}\}$$ denote the set of integer assignments corresponding to optimal solutions of \eqref{eq:General MINLP}. Clearly $\Y^* \subset \Y$. When a MILP solution $(\hat{x}, \hat{y})$ is obtained, we distinguish between three cases depending on $\hat{y}$.
\begin{enumerate}[(i)]
    \item If \eqref{eq:NLP subproblem} is infeasible, an OA cut from the NLP feasibility problem is added.
    \item If $\hat{y} \in \Y^*$, an ordinary OA cut is added.
    \item If \eqref{eq:NLP subproblem} is feasible but $\hat{y} \notin \Y^*$, an ECP cut is added.
\end{enumerate}
We will prove that this scheme terminates in a finite number of steps provided that a constraint qualification holds at a solution of every subproblem \eqref{eq:NLP subproblem} with $\hat{y} \in \Y^*$.

The algorithm presented here is mainly a theoretical construction. In practice, ECP cuts would be added only when cycling is detected in the OA algorithm. Our results show that only a finite number of ECP cuts are needed to break this cycling, even though the ECP algorithm converges only in the limit.

The following lemma shows that the adapted algorithm generates an integer assignment from $\Y^*$ in a finite number of iterations, essentially by proving that the ECP algorithm will do the same. This will be an important fact for proving finite convergence of the algorithm.
\begin{lemma}\label{lem:ECP finite to optimal}
    Let $\{(x^k, y^k)\}$ be a sequence generated by the adapted algorithm. Then there exists $K$ such that $y^K \in \Y^*$.
\end{lemma}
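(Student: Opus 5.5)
We argue by contradiction: suppose $y^k \notin \Y^*$ for every $k$. The plan is to show that the sequence then behaves like an ECP sequence, and that every limit point of such a sequence is optimal for \eqref{eq:General MINLP}. Since $\Y$ is finite, this forces some optimal solution to have an integer part outside $\Y^*$, which is absurd.

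Under the assumption, only cases (i) and (iii) of the adapted scheme ever occur. Case (i) can occur at most once per infeasible integer assignment $\hat{y}$. Indeed, the OA cut from the NLP feasibility problem excludes that assignment entirely, by the standard argument of \cite{fletcher_1994}; that argument needs no constraint qualification for infeasible subproblems, or one can invoke it as assumed in the original algorithm. Since $\Y$ is finite, there is an index $K_0$ after which every iteration falls in case (iii), so an ECP cut is added at $(x^k, y^k)$.

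Next, the iterates lie in the compact set $\Lc$ (recall that $\X$ is compact and $\Y$ is finite). Hence there is a subsequence $(x^{k_j}, y^{k_j})$ converging to some $(x^\infty, y^\infty)$, and since $\Y$ is discrete we may take $y^{k_j} = y^\infty$ for all $j$. I then run the standard ECP convergence argument. Each cut $g_i(z^{k_j}) + \nabla g_i(z^{k_j})^\top (z - z^{k_j}) \leq 0$, with $z^{k_j} = (x^{k_j}, y^\infty)$ and $i$ a most violated constraint, is kept for all later iterations, so it is satisfied by $z^{k_{j+1}}$. This gives
\[
g_i(z^{k_j}) \leq \|\nabla g_i(z^{k_j})\| \, \|z^{k_{j+1}} - z^{k_j}\|.
\]
The gradients are bounded on the compact set because $g$ is $C^1$. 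Letting $j \to \infty$ therefore yields $\max_i g_i(z^\infty) \leq 0$, so $z^\infty \in \F$. Each MILP relaxation contains $\F$, so $f(z^{k_j}) \leq f^*$, and by continuity $f(z^\infty) \leq f^*$. Combined with feasibility, this shows $z^\infty$ is optimal, so $y^\infty \in \Y^*$, contradicting $y^{k_j} = y^\infty \notin \Y^*$.

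The main obstacle is making the limiting argument rigorous. One point is the choice of which constraint is cut: if the ECP step adds a cut for the most violated constraint, or for all violated ones, the index $i$ above can be fixed along a further subsequence, since there are only $p$ constraints. The other point is that the MILP subproblems of the adapted algorithm also contain OA feasibility cuts and other accumulated cuts. These only shrink the relaxation while still containing $\F$, so the bound $f(z^k) \leq f^*$ remains valid. I would also make explicit that the algorithm does not terminate before reaching $\Y^*$ in a way that breaks the argument; if it terminates early, it must do so with an optimal point, which again gives $y^K \in \Y^*$.
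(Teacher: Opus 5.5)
Your proposal is correct and follows essentially the same route as the paper: after finitely many feasibility cuts the scheme reduces to ECP, and compactness gives a subsequence with constant integer part converging to a point of $\F$. The MILP bound $f(z^{k_j}) \le f^*$ then yields the contradiction. Two differences are minor and slightly cleaner. You derive the feasibility of the limit explicitly from the accumulated cuts instead of citing ECP convergence. You also conclude directly from $f(z^\infty) \le f^*$ that $z^\infty$ is optimal, so $y^\infty \in \Y^*$, rather than using the gap $\gamma > f^*$ and a finite index $J$ as the paper does.
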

\begin{proof}
    If $\hat{y} \in \Y$ but \eqref{eq:NLP subproblem} is infeasible, the OA cut generated will separate $\hat{y}$ from the feasible set \cite[Lemma 1]{fletcher_1994}. Since $\Y$ is finite, a finite number of such cuts is sufficient to separate all of them. We can therefore assume without loss of generality that all $\hat{y} \in \Y$ have a feasible subproblem \eqref{eq:NLP subproblem}.

    Since $\Y$ is finite, we can enumerate $\Y \setminus \Y^*$ as $y_1, y_2, \ldots, y_M$ where $M = \lvert \Y \setminus \Y^* \rvert$. Let $\gamma_i$ be the optimal value of (NLP($y_i$)) and define $\gamma = \min_i \gamma_i$. Then $\gamma$ is the best objective value attained by any suboptimal integer assignment, and it clearly holds that $\gamma > f^*$.

    Assume, for contradiction, that there is no $K$ with $y^K \in \Y^*$. This reduces the algorithm to the ECP algorithm since only ECP cuts are added. Since the sequence $\{(x^k, y^k)\}$ belongs to a compact set, there exists a convergent subsequence $(x^{k_j}, y^{k_j})$ with $y^{k_j} \notin \Y^*$. By the convergence of the ECP algorithm, this subsequence converges to a feasible point $(\bar{x}, \bar{y})$, that is, $g(\bar{x}, \bar{y}) \leq 0$. Hence there exists a $J$ such that $$f(\bar{x}, \bar{y}) - f(x^{k_J}, y^{k_J}) < \gamma - f^*.$$ It holds that $f(\bar{x}, \bar{y}) \geq \gamma$ since $\bar{y} \in \Y \setminus \Y^*$ and $\bar{x}$ is feasible for (NLP($\bar{y}$)). We then obtain
    \begin{equation*}
        \gamma - f^* > f(\bar{x}, \bar{y}) - f(x^{k_J}, y^{k_J}) \geq \gamma - f(x^{k_J}, y^{k_J}).
    \end{equation*}
    Rearranging gives $f(x^{k_J}, y^{k_J}) > f^* = f(x^*, y^*)$ for some optimal solution $(x^*, y^*)$ of \eqref{eq:General MINLP}. Since $(x^*, y^*)$ is feasible, it is feasible for every MILP subproblem, so $(x^*, y^*)$ would be returned instead of $(x^{k_J}, y^{k_J})$. This is a contradiction, which proves the lemma.
\end{proof}

The following lemma is essentially contained in the proof of Theorem~2 in \cite{fletcher_1994}. We include it because the result is used in Theorem \ref{thm:algo finite}.
\begin{lemma}\label{lem:no suboptimal points}
    Assume that an OA cut is added at $(\hat{x}, \hat{y})$ and that a constraint qualification holds at $\hat{x}$ for \eqref{eq:NLP subproblem}. Then for any $x$ such that $(x, \hat{y})$ satisfies the cut inequality, it holds that $f(x, \hat{y}) \geq f(\hat{x}, \hat{y})$.
\end{lemma}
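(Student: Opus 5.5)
The plan is to argue by contradiction via the KKT conditions, which the constraint qualification makes available. Here $\hat{x}$ is an optimal solution of \eqref{eq:NLP subproblem}: in the OA algorithm, the OA cut is added at the NLP optimum. Since a constraint qualification holds at $\hat{x}$, there exist multipliers $\mu \geq 0$ for the nonlinear constraints $g_i(\cdot,\hat{y}) \leq 0$, and multipliers $\nu \geq 0$ for the linear inequalities $Ax \leq b$ describing $\X$. They satisfy stationarity,
\begin{equation*}
c + \sum_{i} \mu_i \nabla_x g_i(\hat{x}, \hat{y}) + A^\top \nu = 0,
\end{equation*}
together with complementary slackness $\mu_i g_i(\hat{x},\hat{y}) = 0$ and $\nu_j (A\hat{x} - b)_j = 0$.

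Next, I would take any $x$ such that $(x,\hat{y})$ satisfies the OA cut inequalities. That is, for every $i$,
\begin{equation*}
g_i(\hat{x},\hat{y}) + \nabla_x g_i(\hat{x},\hat{y})^\top (x - \hat{x}) \leq 0,
\end{equation*}
where the $y$-component of the gradient term drops out since $y=\hat{y}$. I would also take $x \in \X$, since $\X$ is part of the MILP relaxation. Multiplying each cut by $\mu_i$ and summing gives $\sum_i \mu_i \nabla_x g_i^\top (x-\hat{x}) \leq 0$, because the terms $\mu_i g_i(\hat{x},\hat{y})$ vanish by complementarity. Similarly, for active rows of $A$ we have $A_j(x-\hat{x}) \leq 0$, so $\nu^\top A(x-\hat{x}) \leq 0$. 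By stationarity,
\begin{equation*}
c^\top (x-\hat{x}) = -\sum_i \mu_i \nabla_x g_i^\top (x-\hat{x}) - \nu^\top A (x-\hat{x}) \geq 0.
\end{equation*}
Hence $f(x,\hat{y}) - f(\hat{x},\hat{y}) = c^\top(x-\hat{x}) \geq 0$, which is the claim.

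The main obstacle is the justification of the KKT system. This is where the constraint qualification is essential, and it should be phrased generally (e.g.\ Abadie's condition, as in \cite{fletcher_1994}) rather than through Slater's condition. Equivalently, one can argue geometrically, as in Section~\ref{sec:CQ}. Under Abadie's condition, the linearized cone of feasible directions equals the tangent cone at $\hat{x}$. Optimality then says that no direction in this cone decreases $c^\top x$. The direction $x-\hat{x}$ satisfies the linearized active constraints, so $c^\top(x-\hat{x}) \geq 0$.

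A minor point is whether cuts are added for all constraints or only active ones. Only indices with $\mu_i>0$ matter, and these are active, so it suffices that cuts for the active constraints are present.
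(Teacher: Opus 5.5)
Your proof is correct and is essentially the paper's argument. The paper simply asserts that, under the constraint qualification, no direction satisfying the linearized constraints at $\hat{x}$ is a descent direction; that is exactly your Abadie-based geometric variant, and your KKT computation is the Farkas-dual form of the same step. Your write-up is more careful than the paper's in two respects: it makes explicit that $x \in \X$ must be assumed, since the claim can fail for points outside $\X$ when a bound of $\X$ is binding at $\hat{x}$, and it shows via complementary slackness that cuts on the active constraints suffice.
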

\begin{proof}
    Since $\hat{x}$ is an optimal solution of \eqref{eq:NLP subproblem} and a constraint qualification holds for \eqref{eq:NLP subproblem}, there is no feasible descent direction. That is,
    \begin{equation*}
        \nabla f(\hat{x}, \hat{y})^\top \begin{pmatrix}
            x - \hat{x} \\ 0
        \end{pmatrix}
        \geq 0
    \end{equation*}
    for any feasible $x$. By the convexity of $f$, we get
    \begin{equation*}
        f(x, \hat{y}) \geq f(\hat{x}, \hat{y}) + \nabla f(\hat{x}, \hat{y})^\top \begin{pmatrix}
            x - \hat{x} \\ 0
        \end{pmatrix}
        \geq f(\hat{x}, \hat{y}).
    \end{equation*}
\end{proof}

Using Lemma \ref{lem:ECP finite to optimal}, we can now prove the main result. Theorem \ref{thm:algo finite} shows that the constructed algorithm converges in a finite number of iterations, which implies that adding ECP cuts can resolve the issue of cycling in the OA algorithm using only a finite number of ECP cuts.
\begin{theorem}\label{thm:algo finite}
    The proposed algorithm converges in a finite number of iterations.
\end{theorem}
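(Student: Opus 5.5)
The plan is to combine Lemma \ref{lem:ECP finite to optimal} with the standard finite-termination argument of \cite[Thm.~2]{fletcher_1994}, now using Lemma \ref{lem:no suboptimal points} only for integer assignments in $\Y^*$. Here \emph{convergence} means termination once the lower bound from the MILP subproblem meets the upper bound from the best NLP solution found so far. Since every cut added is a valid gradient cut of a convex $g_i$, $\F$ stays feasible for every MILP subproblem. Hence the MILP optimal values are nondecreasing lower bounds on $f^*$.

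The first step is the infeasible assignments. As in the proof of Lemma \ref{lem:ECP finite to optimal}, each $\hat{y}$ with infeasible (NLP($\hat{y}$)) is removed by the feasibility-problem cut \cite[Lemma~1]{fletcher_1994}. Since $\Y$ is finite, this case occurs only finitely often.

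The second step uses Lemma \ref{lem:ECP finite to optimal}: after finitely many iterations some $y^K\in\Y^*$ is produced. Case (ii) then applies, so (NLP($y^K$)) is solved, giving $x^*$ with $f(x^*,y^K)=f^*$. The upper bound becomes $f^*$ and stays there.

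The third step shows that every later MILP solution $(\hat{x},\hat{y})$ with $\hat{y}\in\Y^*$ triggers termination. Such a $\hat{y}$ has already been visited in case (ii), so an OA cut at the optimal NLP solution $\bar{x}$ of (NLP($\hat{y}$)) is present, and by assumption a constraint qualification holds there. Lemma \ref{lem:no suboptimal points} then gives $f(\hat{x},\hat{y})\ge f(\bar{x},\hat{y})=f^*$. So the lower bound equals $f^*$ and the algorithm stops. Put differently, each $\hat{y}\in\Y^*$ can be visited at most once before termination.

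The remaining, and main, obstacle is ruling out infinitely many further iterations with $\hat{y}\notin\Y^*$, where only ECP cuts are added and no finite-visit bound holds. I would argue by contradiction, exactly as in Lemma \ref{lem:ECP finite to optimal}. By compactness, a subsequence of the case-(iii) iterates converges, and by ECP convergence its limit $(\bar{x},\bar{y})$ satisfies $g(\bar{x},\bar{y})\le0$. Since $\bar{y}\notin\Y^*$ and its subproblem is feasible, $f(\bar{x},\bar{y})\ge\gamma>f^*$. Therefore, for large $J$, $f(x^{k_J},y^{k_J})>f^*$. But an optimal point $(x^*,y^*)$ remains feasible for every MILP subproblem and would be preferred, which is a contradiction.

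One subtlety needs care. Applying ECP convergence along the subsequence requires that the ECP cuts at earlier iterates of the subsequence stay in the MILP. They do, since cuts are never removed, and the interleaved OA cuts only shrink the relaxation. So the standard argument still applies: the cut at $z^{k_j}$ forces $g_i(z^{k_j})+\nabla g_i(z^{k_j})^\top(z^{k_{j+1}}-z^{k_j})\le0$, and bounded gradients on the compact set then give $\max_i g_i(z^{k_j})\to0$.

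Hence only finitely many iterations can occur after $y^K$, and the algorithm terminates finitely with the lower bound equal to $f^*$.
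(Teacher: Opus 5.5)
Your proposal is correct and follows essentially the paper's route. Both arguments use the same three ingredients: feasibility-problem cuts remove infeasible assignments in finitely many steps; the ECP-convergence contradiction of Lemma~\ref{lem:ECP finite to optimal} excludes infinitely many case-(iii) iterates; and Lemma~\ref{lem:no suboptimal points}, together with the finiteness of $\Y^*$, forces $\mathrm{LBD}=f^*$ at the first repeated optimal assignment.

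The only difference is organizational. You run the contradiction argument once over all interleaved case-(iii) iterates, checking explicitly that persistent cuts keep the ECP argument valid. The paper instead ``restarts'' the algorithm and reapplies the lemma. One small slip in your third step: a later visit to $\hat{y}\in\Y^*$ forces termination only if that $\hat{y}$ was visited before, which is exactly what your ``at most once'' reformulation correctly states.
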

\begin{proof}
    Since infeasible integer assignments are handled with OA cuts, they are removed in a finite number of steps. Lemma \ref{lem:ECP finite to optimal} states that a finite number of steps is needed to obtain a point $(\bar{x}, y^*)$ with $y^* \in \Y^*$. Since an OA cut is added at this step, a cut is added at $(x^*, y^*)$, that is, at an optimal solution. The algorithm thereby obtains $f^*$ as an upper bound on the optimal objective value. It remains for the algorithm to conclude that the lower bound is also $f^*$.

    We can now view this as restarting the algorithm, since the only difference is the initial linear relaxation. By Lemma \ref{lem:ECP finite to optimal}, only a finite number of steps is again needed to obtain a point $(\bar{x}, y^*)$ with $y^* \in \Y^*$. This process can then be repeated. Since $\Y^*$ is finite, because $\Y$ is finite, the process can repeat only a finite number of times before some $y^*$ is repeated. By Lemma \ref{lem:no suboptimal points}, the objective value of this point is greater than or equal to $f(x^*, y^*)$. Since this is the optimal objective value of the MILP subproblem, the lower bound on the optimal objective value of \eqref{eq:General MINLP} is $f^*$. Hence the upper and lower bounds coincide, which shows that the algorithm converges in a finite number of iterations.
\end{proof}

In practice, of course, we cannot know beforehand whether $\hat{y}$ lies in $\Y^*$. Even if it were possible, implementing this algorithm directly would not be advisable. Instead, the standard OA algorithm should be used, with an ECP cut serving as a fallback. As has been discussed, the OA algorithm ensures in theory, with all assumptions satisfied, that no $\hat{y}$ is repeated. Hence, whenever some $\hat{y}$ has been obtained twice, an ECP cut is added instead of an OA cut. Since Theorem~\ref{thm:algo finite} establishes finite convergence of the constructed algorithm, this fallback scheme is also guaranteed to converge. The fallback scheme may also circumvent issues due to numerical tolerances as long as the subproblems with optimal integer assignments are solved accurately enough. It therefore provides a practical and simple way of handling cycling while remaining theoretically sound.

The convergence result lies between those of the OA algorithm and the ECP algorithm. For the OA algorithm, convergence is achieved in at most $|\Y|$ iterations, while the ECP algorithm does not converge finitely. For our adapted algorithm, convergence is finite but without a bound on the number of iterations. Since the ECP algorithm does not have a bound on the number of iterations, we doubt that it is possible to find such a bound.

\section{Conclusion}\label{sec:conclusion}
In this paper, we have discussed several reasons why the OA algorithm can fail to converge. The main issue is cycling, by which we mean that integer assignments reappear in the OA algorithm; in theory this should not happen. We argued that failing to satisfy constraint qualifications and obtaining non-exact solutions of the NLP subproblems are both linked to the issue of cycling. For constraint qualifications, the link is that they are key to ensuring that separation is achieved. For non-exact solutions, the link is that no guarantee of separation can be given for points close to, but not equal to, the optimal solution.

To connect the issues of constraint qualifications and non-exact solutions, Theorem~\ref{thm:Slater} was formulated and proved. It shows that one can construct a point for the gradient cut which is feasible with respect to the nonlinear constraints but at which separation nonetheless fails. This means that, if the feasible set is small due to the nonlinear constraints, then high precision of the NLP solution is required for separation.

Finally, we argued that adding ECP cuts in conjunction with the OA algorithm provides a way of handling the issues described. We proved formally in Theorem~\ref{thm:algo finite} that adding ECP cuts at suboptimal integer assignments together with OA cuts at other points leads to a finite algorithm. We argue that this means that implementing a standard OA algorithm with ECP cuts as a fallback whenever integer assignments are repeated is a theoretically sound way of handling the issue of cycling.

\newpage
\bibliographystyle{amsplain}
\bibliography{references}

\providecommand{\bysame}{\leavevmode\hbox to3em{\hrulefill}\thinspace}
\providecommand{\MR}{\relax\ifhmode\unskip\space\fi MR }
\providecommand{\MRhref}[2]{%
  \href{http://www.ams.org/mathscinet-getitem?mr=#1}{#2}
}
\providecommand{\href}[2]{#2}
\begin{thebibliography}{10}

\bibitem{knitro}
Artelys, \emph{Knitro 15.1}, 2025, \url{https://www.artelys.com/app/docs/knitro/}.

\bibitem{belotti2013mixed}
P.~Belotti, C.~Kirches, S.~Leyffer, J.~Linderoth, J.~Luedtke, and A.~Mahajan, \emph{Mixed-integer nonlinear optimization}, Acta Numer. \textbf{22} (2013), 1--131, \url{https://doi.org/10.1017/S0962492913000032}.

\bibitem{bonami2008algorithmic}
P.~Bonami, L.~T. Biegler, A.~R. Conn, G.~Cornu{\'e}jols, I.~E. Grossmann, C.~D. Laird, J.~Lee, A.~Lodi, F.~Margot, N.~Sawaya, and A.~W{\"a}chter, \emph{An algorithmic framework for convex mixed integer nonlinear programs}, Discrete Optim. \textbf{5} (2008), no.~2, 186--204, \url{https://doi.org/10.1016/j.disopt.2006.10.011}.

\bibitem{minlplib}
M.~R. Bussieck, A.~S. Drud, and A.~Meeraus, \emph{{MINLPLib}---a collection of test models for mixed-integer nonlinear programming}, INFORMS J. Comput. \textbf{15} (2003), no.~1, 114--119, \url{https://doi.org/10.1287/ijoc.15.1.114.15159}.

\bibitem{dambrosio_2009}
C.~D'Ambrosio, J.~Lee, and A.~W{\"a}chter, \emph{A global-optimization algorithm for mixed-integer nonlinear programs having separable non-convexity}, Algorithms - {ESA} 2009 (Berlin, Heidelberg) (A.~Fiat and P.~Sanders, eds.), Lecture Notes in Computer Science, vol. 5757, Springer, 2009, \url{https://doi.org/10.1007/978-3-642-04128-0_10}, pp.~107--118.

\bibitem{delfino_2018}
A.~Delfino and W.~de~Oliveira, \emph{Outer-approximation algorithms for nonsmooth convex {MINLP} problems}, Optimization \textbf{67} (2018), no.~6, 797--819, \url{https://doi.org/10.1080/02331934.2018.1434173}.

\bibitem{duran_1986}
M.~A. Duran and I.~E. Grossmann, \emph{An outer-approximation algorithm for a class of mixed-integer nonlinear programs}, Math. Program. \textbf{36} (1986), no.~3, 307--339, \url{https://doi.org/10.1007/BF02592064}.

\bibitem{eronen_2014}
V.-P. Eronen, M.~M. M{\"a}kel{\"a}, and T.~Westerlund, \emph{On the generalization of {ECP} and {OA} methods to nonsmooth convex {MINLP} problems}, Optimization \textbf{63} (2014), no.~7, 1057--1073, \url{https://doi.org/10.1080/02331934.2012.712118}.

\bibitem{fletcher_1994}
R.~Fletcher and S.~Leyffer, \emph{Solving mixed integer nonlinear programs by outer approximation}, Math. Program. \textbf{66} (1994), 327--349, \url{https://doi.org/10.1007/BF01581153}.

\bibitem{dicopt}
I.~E. Grossmann, J.~Viswanathan, A.~Vecchietti, R.~Raman, and E.~Kalvelagen, \emph{{GAMS/DICOPT}: A discrete continuous optimization package}, 2003, \url{https://www.amsterdamoptimization.com/pdf/dicopt.pdf}.

\bibitem{aimms_aoa}
M.~Hunting, \emph{The {AIMMS} outer approximation algorithm for {MINLP}}, 2011, {AIMMS} technical report.

\bibitem{kronqvist2020using}
J.~Kronqvist, D.~E. Bernal, and I.~E. Grossmann, \emph{Using regularization and second order information in outer approximation for convex {MINLP}}, Math. Program. \textbf{180} (2020), no.~1, 285--310, \url{https://doi.org/10.1007/s10107-018-1356-3}.

\bibitem{kronqvist2019review}
J.~Kronqvist, D.~E. Bernal, A.~Lundell, and I.~E. Grossmann, \emph{A review and comparison of solvers for convex {MINLP}}, Optim. Eng. \textbf{20} (2019), no.~2, 397--455, \url{https://doi.org/10.1007/s11081-018-9411-8}.

\bibitem{kronqvist2025}
J.~Kronqvist, D.~E. Bernal~Neira, and I.~E. Grossmann, \emph{50 years of mixed-integer nonlinear and disjunctive programming}, Eur. J. Oper. Res. \textbf{331} (2025), no.~3, 687--705, \url{https://doi.org/10.1016/j.ejor.2025.07.016}.

\bibitem{kronqvist2016extended}
J.~Kronqvist, A.~Lundell, and T.~Westerlund, \emph{The extended supporting hyperplane algorithm for convex mixed-integer nonlinear programming}, J. Glob. Optim. \textbf{64} (2016), no.~2, 249--272, \url{https://doi.org/10.1007/s10898-015-0322-3}.

\bibitem{shot}
A.~Lundell, J.~Kronqvist, and T.~Westerlund, \emph{The supporting hyperplane optimization toolkit for convex {MINLP}}, J. Glob. Optim. \textbf{84} (2022), no.~1, 1--41, \url{https://doi.org/10.1007/s10898-022-01128-0}.

\bibitem{mahajan2021minotaur}
A.~Mahajan, S.~Leyffer, J.~Linderoth, J.~Luedtke, and T.~Munson, \emph{{Minotaur}: A mixed-integer nonlinear optimization toolkit}, Math. Program. Comput. \textbf{13} (2021), no.~2, 301--338, \url{https://doi.org/10.1007/s12532-020-00196-1}.

\bibitem{mangasarian_1969}
O.~L. Mangasarian, \emph{Nonlinear programming}, McGraw-Hill, New York, 1969.

\bibitem{peterson_1973}
D.~W. Peterson, \emph{A review of constraint qualifications in finite-dimensional spaces}, SIAM Rev. \textbf{15} (1973), no.~3, 639--654, \url{https://doi.org/10.1137/1015075}.

\bibitem{quesada1992lp}
I.~Quesada and I.~E. Grossmann, \emph{An {LP/NLP} based branch and bound algorithm for convex {MINLP} optimization problems}, Comput. Chem. Eng. \textbf{16} (1992), no.~10--11, 937--947, \url{https://doi.org/10.1016/0098-1354(92)80028-8}.

\bibitem{westerlund_1995}
T.~Westerlund and F.~Pettersson, \emph{An extended cutting plane method for solving convex {MINLP} problems}, Comput. Chem. Eng. \textbf{19} (1995), no.~Suppl. 1, S131--S136, Proceedings of {ESCAPE}-5, Bled, Slovenia, 11--14 June 1995; \url{https://doi.org/10.1016/0098-1354(95)87027-X}.

\end{thebibliography}

\begin{appendices}
\newpage
\section{Outer approximation algorithm}\label{app:OA}
Algorithm \ref{algo:OA} presents the outer approximation algorithm. It corresponds to Algorithm~1 of \cite{fletcher_1994}. Note that the formulation here is assumed to be \eqref{eq:General MINLP}, so that $f$ is linear and need not be linearized. Furthermore, the description in \cite{fletcher_1994} adds an upper bound of $\mathrm{UBD}$ to the objective function of $M$ and concludes optimality when $M$ is infeasible. Here, the stopping criterion used is $\mathrm{LBD} = \mathrm{UBD}$, which is equivalent.

\begin{algorithm}
\caption{Linear Outer Approximation}
\begin{algorithmic}[1]\label{algo:OA}

\STATE \textbf{Initialization:} Given $y^0$; set $i = 0$, $T^{-1} = \emptyset$, $S^{-1} = \emptyset$,
$\mathrm{LBD} = -\infty$, and $\mathrm{UBD} = \infty$.

\REPEAT

    \STATE Solve the subproblem $\mathrm{NLP}(y^i)$, or the feasibility problem $F(y^i)$ if
    $\mathrm{NLP}(y^i)$ is infeasible, and let the solution be $x^i$.

    \STATE Linearize the (active) constraint functions about $(x^i, y^i)$.
    Set
    \[
        T^i = T^{i-1} \cup \{i\} \quad \text{or} \quad S^i = S^{i-1} \cup \{i\}
    \]
    as appropriate.

    \IF{$\mathrm{NLP}(y^i)$ is feasible \AND $f^i < \mathrm{UBD}$}
        \STATE Update current best point: set $x^* = x^i$, $y^* = y^i$, $\mathrm{UBD} = f^i$.
    \ENDIF

    \STATE Solve the current relaxation $M^i$ of the master program $M$:
    \[
        \min_{x,\, y} \; f(x, y)
    \]
    subject to
    \begin{align*}
        &0 \geq g^j + [\nabla g^j]^\top \begin{pmatrix} x - x^j \\ y - y^j \end{pmatrix},
        \quad \forall\, j \in T^i, \\
        &0 \geq g^k + [\nabla g^k]^\top \begin{pmatrix} x - x^k \\ y - y^k \end{pmatrix},
        \quad \forall\, k \in S^i, \\
        &x \in \X, \quad y \in \Y \text{ integer},
    \end{align*}
    and set $\mathrm{LBD} = f(x^{i+1}, y^{i+1})$, giving a new integer assignment $y^{i+1}$.
    Set $i = i + 1$.

\UNTIL{$\mathrm{LBD} = \mathrm{UBD}$}

\end{algorithmic}
\end{algorithm}

\newpage
\section{Extended cutting plane algorithm}\label{app:ECP}
Algorithm \ref{algo:ECP} summarizes the extended cutting plane algorithm. It corresponds to the algorithm presented in \cite{westerlund_1995}. In this basic form, the algorithm is in general infinite. If a tolerance on feasibility with respect to the nonlinear constraints is added, however, it becomes finite.

\begin{algorithm}
\caption{Extended Cutting Plane (ECP) Method}
\begin{algorithmic}[1]\label{algo:ECP}

\STATE \textbf{Initialization:} Set $k = 0$ and $\Omega_0 = \Lc = \X \times \Y$.

\LOOP

    \STATE Solve the MILP subproblem over the current feasible set $\Omega_k$:
    \[
        \min_{x,\, y} \; \left\{ c^\top x + d^\top y \right\}
        \quad \text{s.t.} \quad (x, y) \in \Omega_k
    \]
    to obtain a new point $(x^k, y^k)$ with objective value
    $Z_k = c^\top x^k + d^\top y^k$.

    \STATE Identify the most violated constraint:
    \[
        f_k(x^k, y^k) = \max_i \left\{ g_i(x^k, y^k) \right\}
    \]

    \IF{$(x^k, y^k) \in \N$}
        \STATE \textbf{Converged.} The point $(x^k, y^k)$ is
        feasible and optimal. \textbf{Stop.}
    \ENDIF

    \STATE Generate a linearization of $f_k$ at $(x^k, y^k)$:
    \[
        l_k(x, y) = f_k(x^k, y^k)
        + \nabla f_k(x^k, y^k)^\top \begin{pmatrix} x - x^k \\ y - y^k \end{pmatrix}
    \]

    \STATE Update the feasible set by adding the new cutting plane constraint:
    \[
        \Omega_{k+1} = \Omega_k \cap \left\{ x, y \mid l_k(x, y) \leq 0 \right\}
    \]

    \STATE Set $k = k + 1$.

\ENDLOOP

\end{algorithmic}
\end{algorithm}

\end{appendices}

\end{document}